\theoremstyle{plain}
\newtheorem{theorem}{Theorem}[section]
\newtheorem{corollary}[theorem]{Corollary}
\theoremstyle{definition}
\newtheorem{definition}{Definition}
\numberwithin{equation}{section}
\newcommand{\la}{\lambda}
\newcommand{\vin}{\vdash}
\newcommand{\qq}{(q;q)_\infty}
\newcommand{\gauss}{\genfrac{[}{]}{0pt}{}}
\numberwithin{equation}{section}
\begin{document}

\title{Refinement of some partition identities of Merca and Yee}
\author[P. J. Mahanta]{Pankaj Jyoti Mahanta}
\address{Gonit Sora, Dhalpur, Assam 784165, India}
\email{pankaj@gonitsora.com}
\author[M. P. Saikia]{Manjil P. Saikia}
\address{School of Mathematics, Cardiff University, Cardiff, CF24 4AG, UK}
\email{manjil@saikia.in}

\keywords{integer partitions, generating functions, partition identities, truncated partition theorems.}

\subjclass[2020]{11P83, 11P84, 05A17, 05A19.}

\date{\today.}

\begin{abstract}
    Recently, Merca and Yee proved some partition identities involving two new partition statistics.  We refine these statistics and generalize the results of Merca and Yee. We also correct a small mistake in a result of Merca and Yee.
\end{abstract}

\maketitle

\section{Introduction}

A partition of an integer $n$, is a sequence of weakly decreasing positive integers such that they sum up to $n$. The terms of the sequence are called parts and a partition $\lambda$ of $n$ is denoted by $\lambda \vdash n$. We denote by $p(n)$, the number of partitions of $n$. For instance, $2+2+1$ is a partition of $5$ and $p(5)=7$. A masterful treatment of this topic is in the book by Andrews \cite{AndrewsBook}.

There is a rich history and literature on partitions with various statistics attached to them. Recently, Merca and Yee \cite{MercaYee} studied several such statistics and proved several interesting results (both analytially and combinatorially). The aim of this paper is to refine the results of Merca and Yee \cite{MercaYee} by putting in additional constraints on the partition statistics they studied.

The following functions are of interest in this paper.
\begin{definition}
For a positive integer $n$, we define
\begin{enumerate}
    \item $a_k(n)$ to be the sum of the parts which are divisible by $k$ counted without multiplicity in all the partitions of $n$,
    \item $a_{k,p}(n)$ to be the sum of the parts which are congruent to $p \pmod k$ counted without multiplicity in all the partitions of $n$, where $0\leq p\leq k-1$, and
    \item $b_k(n)$ to be the sum of the distinct parts of $n$ that appear at least $k$ times in all the partitions of $n$.
\end{enumerate}
\end{definition}
For example, $a_3(5)=6$, $a_{3,0}(5)=6$, $a_{3,1}(5)=9$, $a_{3,2}(5)=11$ and $b_3(5)=2$, which can be seen from the fact that the partitions of $5$ are \[5, 4+1, 3+2, 3+1+1, 2+2+1, 2+1+1+1, 1+1+1+1+1.\]

Merca and Yee \cite{MercaYee} studied related functions. In particular, they studied $a(n)$, the sum of parts counted without multiplicity in all the partitions of $n$ and $b(n)$, the sum of distinct parts that appear at least $2$ times in all the partitions of $n$. It is clear from the definition that \[
a(n)=\sum_{p=0}^{k-1}a_{k,p}(n),\] and $b_2(n)=b(n)$. So, $a_{k,p}(n)$ and $b_k(n)$ can be said to be refinements of $a(n)$ and $b(n)$. They also studied the function $a_{2,0}(n)$ and $a_{2,1}(n)$ which they denoted by $a_e(n)$ and $a_o(n)$ respectively. We will keep their notation for these special cases in the remainder of this paper.

Merca and Yee \cite{MercaYee} found the generating functions of $a(n)$, $a_e(n)$, $a_o(n)$ and $b(n)$, connected these functions in terms of very simple relations and then further connected the function $b(n)$ to two other partition functions $M_\ell (n)$ and $MP_\ell(n)$, which we will define in the next section. The aim of the present paper is to generalize all of these results for our refined functions $a_{k,p}(n)$ and $b_k(n)$. While doing this, we also correct a minor error in a result of Merca and Yee \cite{MercaYee}.

The rest of the paper is organized as follows: in Section \ref{sec:results} we state all of our results and show as corollaries all of the results of Merca and Yee \cite{MercaYee}, in Section \ref{sec:proof1} we prove our results using analytical techniques, in Section \ref{sec:comb} we prove all but one of our results using combinatorial techniques, and finally we end the paper with some remarks in Section \ref{sec:rem}. We closely follow the techniques used by Merca and Yee \cite{MercaYee} in our proofs.

\section{Results and Corollaries}\label{sec:results}

We need the notation for the $q$-Pochhammer symbol
\[
(a;q)_\infty=\prod_{n=0}^\infty (1-aq^n) \quad \text{for}~|q|<1.
\]The generating functions for $a_k(n)$, $a_{k,p}(n)$ and $b_k(n)$ are given in the following theorem.
\begin{theorem}\label{ThmGF}
We have
\begin{align*}
     \sum_{n=1}^\infty a_k(n)q^n &=~\frac{1}{(q;q)_\infty}\cdot \frac{kq^k}{(1-q^k)^2},\\
          \sum_{n=1}^\infty a_{k,p}(n)q^n &=~\frac{1}{(q;q)_\infty}\cdot \frac{(pq^{p-k}+(k-p)q^p)q^k}{(1-q^k)^2},\\
               \sum_{n=1}^\infty b_k(n)q^n &=~\frac{1}{(q;q)_\infty}\cdot \frac{q^k}{(1-q^k)^2}.
\end{align*}
\end{theorem}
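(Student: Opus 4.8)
The plan is to compute each generating function by summing a contribution over partitions, organized by which part-value is being counted. Recall the standard fact that $\frac{1}{(q;q)_\infty} = \sum_{n\ge 0} p(n) q^n$ is the generating function for the number of partitions of $n$. The key observation is that each of these statistics counts, for a fixed part-value $m$, the quantity $m$ times the number of partitions of $n$ in which $m$ appears (with the appropriate multiplicity condition), and then sums over all relevant $m$. So the strategy is to fix the part-value $m$, find the generating function for ``number of partitions of $n$ containing at least one part equal to $m$'' (or ``at least $k$ copies of $m$''), weight it by $m$, and sum over admissible $m$.

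First, for $a_k(n)$, I would note that a partition contains the part $m$ at least once precisely when its generating function picks up a factor $\frac{q^m}{1-q^m}$ instead of $\frac{1}{1-q^m}$; since the remaining parts are unconstrained, the generating function for partitions of $n$ containing at least one part equal to $m$ is $\frac{1}{(q;q)_\infty}\cdot q^m$. Weighting by $m$ and summing over $m$ divisible by $k$, i.e. $m = kj$ for $j\ge 1$, gives
\begin{align*}
\sum_{n=1}^\infty a_k(n)q^n &= \frac{1}{(q;q)_\infty}\sum_{j=1}^\infty kj\, q^{kj} = \frac{1}{(q;q)_\infty}\cdot \frac{kq^k}{(1-q^k)^2},
\end{align*}
using the identity $\sum_{j\ge 1} j x^j = \frac{x}{(1-x)^2}$ with $x=q^k$. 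For $b_k(n)$, the only change is that $m$ must appear at least $k$ times, so the relevant factor is $\frac{q^{km}}{1-q^m}$, giving generating function $\frac{1}{(q;q)_\infty}\cdot q^{km}$ for a fixed $m$; here $m$ is weighted simply by $1$ (we sum the distinct part-values, not $m$ times a count — I must be careful to read the definition as ``sum of the distinct parts,'' so each qualifying $m$ contributes $m$), hence $\sum_n b_k(n)q^n = \frac{1}{(q;q)_\infty}\sum_{m\ge 1} m\, q^{km} = \frac{1}{(q;q)_\infty}\cdot \frac{q^k}{(1-q^k)^2}$, again by the same summation identity with $x=q^k$.

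For $a_{k,p}(n)$ the bookkeeping is the delicate part, since the residue class $p \pmod k$ with $0\le p\le k-1$ must be handled, and the claimed numerator $pq^{p-k}+(k-p)q^p$ has a negative exponent that needs justification. I would split the admissible part-values $m\equiv p\pmod k$ as $m = p + kj$ and sum $\sum_{m\equiv p} m\, q^m$. Writing $m=p+kj$ with $j$ ranging over $j\ge 0$ when $p\ge 1$ and $j\ge 1$ when $p=0$ (since $m$ must be positive), I expect
\begin{align*}
\sum_{m\equiv p\,(k),\ m\ge 1} m\, q^m = \sum_{j} (p+kj)\, q^{p+kj},
\end{align*}
and evaluating this via $\sum_j q^{kj} = \frac{1}{1-q^k}$ and $\sum_j j\,q^{kj} = \frac{q^k}{(1-q^k)^2}$ should collapse to $\frac{(pq^{p-k}+(k-p)q^p)q^k}{(1-q^k)^2}$ after factoring out $\frac{q^k}{(1-q^k)^2}$ and combining terms over the common denominator. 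The main obstacle I anticipate is getting this algebra to match the stated form exactly: one must verify the $j=0$ boundary term, reconcile the $p=0$ case (where the formula should reduce to the $a_k$ case with the part-values being multiples of $k$), and confirm that the apparent singularity from $q^{p-k}$ is spurious, i.e. that the numerator $pq^{p-k}+(k-p)q^p$ times $q^k$ is genuinely a polynomial with nonnegative powers of $q$. Checking $a(n)=\sum_{p=0}^{k-1} a_{k,p}(n)$ reduces to the identity $\sum_{p=0}^{k-1}\bigl(pq^{p-k}+(k-p)q^p\bigr)q^k \cdot \frac{1}{(1-q^k)^2}$ telescoping to the known generating function for $a(n)$, which serves as a useful consistency check on the algebra.
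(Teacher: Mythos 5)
Your proposal is correct and is essentially the paper's analytic proof in different clothing: the paper introduces a variable $z$ marking the distinguished parts and differentiates at $z=1$, which is exactly your ``fix the part-value $m$, weight the generating function $\frac{q^{m}}{(q;q)_\infty}$ (resp.\ $\frac{q^{km}}{(q;q)_\infty}$) by $m$, and sum over admissible $m$'' decomposition, and both routes land on the identical sums $\frac{1}{(q;q)_\infty}\sum_{j\ge1}kjq^{kj}$, $\frac{1}{(q;q)_\infty}\sum_{j\ge0}(kj+p)q^{kj+p}$ and $\frac{1}{(q;q)_\infty}\sum_{j\ge1}jq^{kj}$. The algebra you flag as a potential obstacle does close up as you expect, since $\frac{pq^{p}}{1-q^{k}}+\frac{kq^{p+k}}{(1-q^{k})^{2}}=\frac{pq^{p}+(k-p)q^{p+k}}{(1-q^{k})^{2}}=\frac{(pq^{p-k}+(k-p)q^{p})q^{k}}{(1-q^{k})^{2}}$, with the $p=0$ case reducing to the $a_k$ formula.
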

\noindent From the above theorem (as well as combinatorially, which we will prove later) the following result follows.
\begin{theorem}\label{ThmComb}
For all $n\geq 1$, we have
\begin{enumerate}
    \item $a_k(n)=kb_k(n)$, and
    \item $a_{k,p}(n)=(k-p)b_k(n-p)+pb_k(n+k-p)$.
\end{enumerate}
\end{theorem}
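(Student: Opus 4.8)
The plan is to derive both identities directly from the generating functions in Theorem \ref{ThmGF}, since those are already available to me. Comparing the generating function for $a_k(n)$ with that for $b_k(n)$, I observe that they differ only by the factor $k$ in the numerator: both share the common factor $\frac{1}{(q;q)_\infty}\cdot\frac{q^k}{(1-q^k)^2}$. Hence $\sum_{n\ge 1} a_k(n)q^n = k\sum_{n\ge 1} b_k(n)q^n$, and equating coefficients of $q^n$ on both sides yields $a_k(n)=kb_k(n)$ immediately. This settles part (1) with essentially no computation.

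For part (2), the key observation is the algebraic factorization of the numerator appearing in the generating function for $a_{k,p}(n)$. First I would write
\[
(pq^{p-k}+(k-p)q^p)q^k = pq^p + (k-p)q^{p+k}.
\]
Then I would pair each term against the $b_k$ generating function. Since $\sum_{n\ge 1}b_k(n)q^n = \frac{1}{(q;q)_\infty}\cdot\frac{q^k}{(1-q^k)^2}$, multiplying this series by $q^p$ shifts indices to give $\sum_n b_k(n)q^{n+p}=\sum_n b_k(m-p)q^{m}$, and multiplying by $q^{p-k}$ (equivalently, writing $pq^p$ as $pq^{-k}\cdot q^{p+k}$ is unnecessary once the numerator is in the form $pq^p+(k-p)q^{p+k}$) produces the appropriate shift. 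Concretely, the generating function for $a_{k,p}$ becomes
\[
\sum_{n\ge 1}a_{k,p}(n)q^n = p\,q^{p-k}\sum_{m}b_k(m)q^{m} + (k-p)\,q^{p}\sum_{m}b_k(m)q^{m}.
\]

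Now I would extract the coefficient of $q^n$ from each piece. The term $(k-p)q^p\sum_m b_k(m)q^m$ contributes $(k-p)b_k(n-p)$ to the coefficient of $q^n$, while the term $pq^{p-k}\sum_m b_k(m)q^m$ contributes $p\,b_k(n-(p-k))=p\,b_k(n+k-p)$. Summing these gives exactly
\[
a_{k,p}(n)=(k-p)b_k(n-p)+p\,b_k(n+k-p),
\]
which is part (2). Throughout I adopt the convention $b_k(m)=0$ for $m\le 0$, so that the shifted series are well defined and the boundary cases cause no trouble.

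The main obstacle I anticipate is bookkeeping rather than conceptual: I must be careful with the index shifts and, in particular, with the factor $q^{p-k}$, whose exponent can be negative when $p<k$. I would verify that the negative shift is legitimate by checking that $b_k(m)q^m$ has no terms with small enough $m$ to produce negative powers of $q$ in the final product — indeed, since $b_k(m)$ is supported on $m\ge k$ and the minimal contributing term has $m\ge k$, the product $q^{p-k}\cdot q^{m}$ has exponent $\ge p\ge 0$, so no spurious negative powers arise. A secondary check is a small numerical verification against the worked example in the introduction (for instance $a_{3,1}(5)=9$ using $b_3(4)$ and $b_3(7)$) to confirm the index conventions are correct.
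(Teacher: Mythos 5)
Your proof is correct and is essentially the paper's own (analytical) argument: the paper likewise derives Theorem \ref{ThmComb} by comparing coefficients in the generating functions of Theorem \ref{ThmGF}, and your index shifts and conventions check out (e.g.\ $a_{3,1}(5)=2b_3(4)+b_3(7)=2+7=9$). The paper additionally supplies an independent combinatorial proof by splitting a part $ka+p$ into $k$ copies of $a$ (resp.\ $a+1$), but that is a separate argument beyond what you were asked to reproduce.
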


As easy corollaries of the above results, two results of Merca and Yee \cite{MercaYee} follow.
\begin{corollary}[Theorem 1.2, \cite{MercaYee}]\label{CorM}
We have
\begin{align*}
         \sum_{n=1}^\infty a_e(n)q^n =~\sum_{n=1}^\infty a_{2,0}(n)q^n &=~\frac{1}{(q;q)_\infty}\cdot \frac{2q^2}{(1-q^2)^2},\\
              \sum_{n=1}^\infty a_o(n)q^n =~\sum_{n=1}^\infty a_{2,1}(n)q^n &=~\frac{1}{(q;q)_\infty}\cdot \frac{q(1+q^2)}{(1-q^2)^2},
              \end{align*}
              and
              \[
                        \sum_{n=1}^\infty a(n)q^n =~\frac{1}{(q;q)_\infty}\cdot \frac{q}{(1-q)^2}.
\]
\end{corollary}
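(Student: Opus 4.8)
The plan is to obtain all three generating functions in Corollary~\ref{CorM} as immediate specializations of the formulas in Theorem~\ref{ThmGF}, which is the natural route given the excerpt's remark that the corollary follows ``from the above theorem.'' First I would set $k=2$, $p=0$ in the second formula of Theorem~\ref{ThmGF}. Here the factor $pq^{p-k}+(k-p)q^p$ collapses to $(k-p)q^p = 2q^0 = 2$, so the summand becomes $\frac{1}{\qq}\cdot\frac{2q^2}{(1-q^2)^2}$, which is exactly the stated generating function for $a_e(n)=a_{2,0}(n)$. That this equals the generating function for $a_{2,0}(n)$ is immediate from the definition $a_e(n)=a_{2,0}(n)$ recorded in the introduction.

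Next I would set $k=2$, $p=1$ in the same formula. Now both terms of $pq^{p-k}+(k-p)q^p$ survive: it becomes $1\cdot q^{1-2}+1\cdot q^{1}=q^{-1}+q$. Multiplying by $q^k=q^2$ gives $(q^{-1}+q)q^2 = q+q^3 = q(1+q^2)$, so the generating function is $\frac{1}{\qq}\cdot\frac{q(1+q^2)}{(1-q^2)^2}$, matching the claim for $a_o(n)=a_{2,1}(n)$. The only point deserving care is checking that the negative power $q^{p-k}$ does not produce genuinely negative exponents in the final series; here the $q^{-1}$ is multiplied by $q^2$ and so contributes a harmless $q$, and in general one should note that the bracketed expression is designed precisely so that $(pq^{p-k}+(k-p)q^p)q^k$ is a polynomial in $q$ with nonnegative exponents (the $pq^{p-k}\cdot q^k = pq^p$ term accounting for the parts congruent to $p$ that are counted ``one period up'').

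Finally I would derive the formula for $a(n)$. Using the relation $a(n)=\sum_{p=0}^{k-1}a_{k,p}(n)$ stated in the introduction, the cleanest argument is to specialize at $k=1$: the only value of $p$ is $p=0$, and the second formula of Theorem~\ref{ThmGF} with $k=1$, $p=0$ gives $\frac{1}{\qq}\cdot\frac{(0+1\cdot q^0)q}{(1-q)^2}=\frac{1}{\qq}\cdot\frac{q}{(1-q)^2}$, exactly as claimed. Alternatively one can sum the $k=2$ cases, since $a(n)=a_e(n)+a_o(n)$ gives $\frac{2q^2+q(1+q^2)}{(1-q^2)^2}=\frac{q(1+q)^2}{(1-q^2)^2}=\frac{q}{(1-q)^2}$ after factoring $(1-q^2)^2=(1-q)^2(1+q)^2$.

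I do not expect any genuine obstacle: the entire corollary is bookkeeping of exponents in the specialization. The one step requiring the most attention is the $p=1$, $k=2$ computation, where one must correctly track the two-term numerator and confirm that the apparent negative power $q^{p-k}$ is absorbed by the $q^k$ factor; everything else is routine substitution.
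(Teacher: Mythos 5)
Your proposal is correct and is exactly the route the paper intends: the paper offers no separate proof of Corollary~\ref{CorM}, presenting it as an immediate specialization of Theorem~\ref{ThmGF} at $(k,p)=(2,0)$, $(2,1)$, and $(1,0)$, and your exponent bookkeeping (including the absorption of $q^{p-k}$ by the factor $q^k$ and the consistency check $a=a_e+a_o$) is accurate.
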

\begin{corollary}[Theorem 1.3, \cite{MercaYee}]
For all $n\geq 1$, we have
\begin{enumerate}
    \item $a_e(n)=a_{2,0}(n)=2b(n)$,
    \item $a_o(n)=a_{2,1}(n)=b(n+1)+b(n-1)$, and
    \item $a(n)=a_{2,0}(n)+a_{2,1}(n)=b(n+1)+2b(n)+b(n-1)$.
\end{enumerate}
\end{corollary}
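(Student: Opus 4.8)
The plan is to read off all three relations as the $k=2$ specialization of part~(2) of Theorem~\ref{ThmComb}, combined with the definitional identifications recorded in the introduction, namely $a_e(n)=a_{2,0}(n)$, $a_o(n)=a_{2,1}(n)$, and $b(n)=b_2(n)$. Thus essentially no new argument is needed; the corollary is a substitution followed by bookkeeping.

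First I would put $k=2$ and $p=0$ in the identity $a_{k,p}(n)=(k-p)b_k(n-p)+pb_k(n+k-p)$. The second summand drops out since its coefficient is $p=0$, leaving $a_{2,0}(n)=2b_2(n)=2b(n)$, which is the first relation. Next, taking $k=2$ and $p=1$ gives $a_{2,1}(n)=b_2(n-1)+b_2(n+1)=b(n-1)+b(n+1)$, the second relation. Finally, for the third relation I would invoke the identity $a(n)=\sum_{p=0}^{k-1}a_{k,p}(n)$ from the introduction in the case $k=2$, so that $a(n)=a_{2,0}(n)+a_{2,1}(n)$; adding the two displayed expressions then yields $a(n)=b(n+1)+2b(n)+b(n-1)$.

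The only point needing care is the boundary behaviour for small $n$, since the term $b(n-1)$ requires interpreting $b_2(m)$ as $0$ for $m\le 0$. I would verify directly, say at $n=1$, that this convention is consistent both with the combinatorial definition of $b_k$ and with the generating function of Theorem~\ref{ThmGF} (where the factor $q^k/(1-q^k)^2$ has lowest power $q^k$, so no contribution arises from nonpositive arguments). Apart from this harmless convention there is no genuine obstacle, so the statement follows immediately. As an alternative route, I note that each of the three generating-function identities of Corollary~\ref{CorM} can be matched term by term against the generating function of $b_k(n)$ from Theorem~\ref{ThmGF} with $k=2$, which gives the same three relations directly at the level of series; but the specialization of Theorem~\ref{ThmComb} is the shorter path.
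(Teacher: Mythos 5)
Your proposal is correct and matches the paper's intent exactly: the paper presents this as an immediate corollary of Theorem \ref{ThmComb} (specializing $k=2$ with $p=0$ and $p=1$, identifying $b_2=b$, and summing for part (3)), which is precisely what you do. The remark on the $b(m)=0$ convention for $m\le 0$ is a sensible extra check but introduces nothing beyond the paper's argument.
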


Andrews and Merca \cite{AndrewsMerca1} introduced a new partition function $M_\ell(n)$, which counts the number of partitions of $n$ where $\ell$ is the least positive integer that is not a part and there are more parts which are greater than $\ell$ than there are parts less than $\ell$. For instance $M_3(5)=0$. We can connect this function with $b_k(n)$ in the following way.
\begin{theorem}\label{Thm:trunc}
For any positive integer $k, \ell$ and $n$, we have
\begin{multline*}
    (-1)^{\ell-1}\left(\sum_{j=-(\ell-1)}^\ell (-1)^j b_k(n-j(3j-1)/2)-\frac{1+(-1)^{[n \equiv 0 \pmod k]+1}}{2}\cdot \frac{n}{k} \right)\\=\sum_{j=1}^{\lfloor n/k\rfloor}jM_\ell(n-kj),
\end{multline*}
where we have used the Iverson bracket, $[P]$ which returns the value $1$ if the logical proposition $P$ is satisfied, and returns $0$ otherwise.
\end{theorem}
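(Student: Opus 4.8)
The plan is to prove the identity at the level of generating functions and then extract the coefficient of $q^n$. The two ingredients I would use are the generating function $\sum_{n\ge1}b_k(n)q^n=\frac{1}{(q;q)_\infty}\cdot\frac{q^k}{(1-q^k)^2}$ from Theorem~\ref{ThmGF}, together with the truncated pentagonal number theorem of Andrews and Merca \cite{AndrewsMerca1}, which I would write in the normalized form
\[
\frac{1}{(q;q)_\infty}\sum_{j=-(\ell-1)}^{\ell}(-1)^j q^{j(3j-1)/2}=1+(-1)^{\ell-1}\sum_{m=1}^{\infty}M_\ell(m)\,q^m.
\]
Denoting the truncated pentagonal sum on the left by $P_\ell(q)$, the whole argument rests on evaluating the single product $\dfrac{P_\ell(q)}{(q;q)_\infty}\cdot\dfrac{q^k}{(1-q^k)^2}$ in two ways.

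Reading the product as $P_\ell(q)\cdot\bigl(\sum_{n\ge1}b_k(n)q^n\bigr)$, the coefficient of $q^n$ is immediately $\sum_{j=-(\ell-1)}^{\ell}(-1)^j b_k\!\left(n-\tfrac{j(3j-1)}{2}\right)$, which is the leading sum on the left-hand side of the claimed identity. Reading the product instead through the Andrews–Merca expansion, I would first expand $\frac{q^k}{(1-q^k)^2}=\sum_{s=1}^{\infty}s\,q^{ks}$, so that the product becomes
\[
\left(\sum_{s=1}^{\infty}s\,q^{ks}\right)+(-1)^{\ell-1}\left(\sum_{m=1}^{\infty}M_\ell(m)q^m\right)\left(\sum_{s=1}^{\infty}s\,q^{ks}\right).
\]
The coefficient of $q^n$ in the second term is $(-1)^{\ell-1}\sum_{s=1}^{\lfloor n/k\rfloor}s\,M_\ell(n-ks)$, which is exactly the right-hand side of the theorem multiplied by $(-1)^{\ell-1}$; here the convention $M_\ell(0)=0$ (forced by the series not containing an $m=0$ term) makes the boundary term $s=n/k$ vanish when $k\mid n$. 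The coefficient of $q^n$ in the first term equals $n/k$ when $k\mid n$ and $0$ otherwise.

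It then remains to verify that this first coefficient is precisely the constant $\frac{1+(-1)^{[n\equiv 0\pmod k]+1}}{2}\cdot\frac{n}{k}$ in the statement: when $k\mid n$ the Iverson bracket is $1$, the sign is $(-1)^2=1$, and the factor is $\frac{n}{k}$; when $k\nmid n$ the bracket is $0$, the sign is $-1$, and the factor collapses to $0$. Equating the two expressions for the coefficient of $q^n$ and multiplying through by $(-1)^{\ell-1}$, using $\bigl((-1)^{\ell-1}\bigr)^2=1$, yields the theorem after renaming the summation index $s\to j$. The only genuine subtlety, and the step I would treat most carefully, is invoking the Andrews–Merca identity in exactly the right normalization and confirming the boundary behaviour of the constant term against the Iverson-bracket expression; everything else is routine manipulation of formal power series. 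Since the argument never opens up a combinatorial model for $M_\ell(n)$, this is plausibly the single result that is established only analytically.
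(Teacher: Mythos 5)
Your proposal is correct and follows essentially the same route as the paper: multiply the Andrews--Merca truncated pentagonal number theorem by $q^k/(1-q^k)^2$, identify $\frac{1}{(q;q)_\infty}\cdot\frac{q^k}{(1-q^k)^2}$ with $\sum b_k(n)q^n$ via Theorem~\ref{ThmGF}, and compare coefficients of $q^n$, with the Iverson-bracket term arising exactly as you describe from the constant term of the truncated identity. Only your closing aside is off: the paper does give a combinatorial proof of this theorem as well; it is Theorem~\ref{Thm:gen1.7} that is established only analytically.
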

The following are two easy corollaries of the above theorem.
\begin{corollary}\label{cor-m-t}
For any positive integer $k, \ell$ and $n$, we have
\[
    (-1)^{\ell-1}\left(\sum_{j=-(\ell-1)}^\ell (-1)^j b_k(n-j(3j-1)/2)-\frac{1+(-1)^{[n \equiv 0 \pmod k]+1}}{2}\cdot \frac{n}{k} \right)\geq 0.
\]
\end{corollary}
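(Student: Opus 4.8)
The plan is to deduce the inequality directly from the equality furnished by Theorem \ref{Thm:trunc}, which I am allowed to assume. The left-hand side of the corollary is, verbatim, the left-hand side of that theorem; so by Theorem \ref{Thm:trunc} it equals
\[
\sum_{j=1}^{\lfloor n/k\rfloor} j\,M_\ell(n-kj).
\]
The entire task then reduces to checking that this finite sum is nonnegative.

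First I would invoke the defining property of $M_\ell$: since $M_\ell(m)$ counts the partitions of $m$ of a prescribed shape, it is a nonnegative integer for every $m$, with the convention $M_\ell(m)=0$ when the argument admits no such partition. Next I would note that the summation index satisfies $1\leq j\leq\lfloor n/k\rfloor$, so $j$ is a \emph{positive} integer throughout, and moreover $n-kj\geq n-k\lfloor n/k\rfloor\geq 0$, so each argument of $M_\ell$ is a nonnegative integer and the truncation at $\lfloor n/k\rfloor$ discards nothing. Consequently every summand $j\,M_\ell(n-kj)$ is a product of a positive integer and a nonnegative integer, hence nonnegative, and a finite sum of nonnegative terms is nonnegative. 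This gives $\sum_{j=1}^{\lfloor n/k\rfloor} j\,M_\ell(n-kj)\geq 0$, and therefore the left-hand side of the corollary is $\geq 0$ as claimed.

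I expect there to be no genuine obstacle here, since all of the substance is carried by Theorem \ref{Thm:trunc}: the corollary is the standard \emph{truncated-theorem} phenomenon, in which a signed (alternating) combination of partition-type quantities is shown to have a definite sign precisely because it coincides with a manifestly nonnegative counting expression. The only point requiring mild care is the bookkeeping of conventions for $M_\ell$ on nonpositive or otherwise inadmissible arguments, which is exactly what guarantees that the displayed right-hand side contains only nonnegative terms.
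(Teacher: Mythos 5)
Your argument is correct and is exactly how the paper obtains this result: it is stated as an ``easy corollary'' of Theorem \ref{Thm:trunc}, the point being that the right-hand side $\sum_{j=1}^{\lfloor n/k\rfloor} j\,M_\ell(n-kj)$ is a sum of products of positive integers with nonnegative partition counts. Your additional bookkeeping about the range of $j$ and the conventions for $M_\ell$ is sensible but does not change the substance.
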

\begin{corollary}
For any positive integer $k$ and $n$, we have
\[
\sum_{j=-\infty}^\infty (-1)^j b_k(n-j(3j-1)/2)=\frac{1+(-1)^{[n \equiv 0 \pmod k]+1}}{2}\cdot \frac{n}{k}.
\]
\end{corollary}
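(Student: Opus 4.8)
The plan is to obtain the corollary as the limiting case $\ell \to \infty$ of Theorem \ref{Thm:trunc}, exploiting the fact that both sides of that identity stabilize for large $\ell$.

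First I would record two elementary vanishing facts. By its definition $b_k(m)$ counts a sum over partitions of a positive integer, so $b_k(m) = 0$ for every $m \leq 0$. Since the generalized pentagonal numbers $j(3j-1)/2$ grow without bound as $\abs{j}\to\infty$, for each fixed $n$ only finitely many terms of the bi-infinite series $\sum_{j=-\infty}^\infty (-1)^j b_k(n-j(3j-1)/2)$ are nonzero; thus the series is in fact a finite sum, and the truncated sum $\sum_{j=-(\ell-1)}^{\ell}$ appearing in Theorem \ref{Thm:trunc} coincides with the full bi-infinite sum as soon as $\ell$ is large enough to capture every nonzero term.

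The key step is to show that the right-hand side of Theorem \ref{Thm:trunc} vanishes for large $\ell$. By the definition of $M_\ell$, for $M_\ell(m)$ to be nonzero the integer $\ell$ must be the least positive integer that is not a part, so each of $1,2,\dots,\ell-1$ must occur, forcing $m \geq 1 + 2 + \cdots + (\ell-1) = \binom{\ell}{2}$. Hence $M_\ell(m) = 0$ whenever $m < \binom{\ell}{2}$. For fixed $n$ and $k$, every argument $n - kj$ on the right satisfies $n - kj \leq n$, so once $\ell$ is chosen with $\binom{\ell}{2} > n$, every term $M_\ell(n-kj)$ is zero and the entire right-hand side vanishes.

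Finally I would combine these observations. Choosing $\ell$ large enough that the left sum has stabilized to the full bi-infinite sum and the right-hand side is zero, Theorem \ref{Thm:trunc} reduces to
\[
(-1)^{\ell-1}\left(\sum_{j=-\infty}^\infty (-1)^j b_k(n-j(3j-1)/2) - \frac{1+(-1)^{[n \equiv 0 \pmod k]+1}}{2}\cdot\frac{n}{k}\right) = 0,
\]
and since $(-1)^{\ell-1} \neq 0$, the bracketed expression must vanish, which is exactly the claimed identity. I do not expect a genuine obstacle here: the argument is a clean passage to the limit, and the only care required is in verifying the two stabilization bounds above, both of which are routine.
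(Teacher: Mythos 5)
Your proposal is correct and is exactly the intended derivation: the paper presents this statement as an ``easy corollary'' of Theorem \ref{Thm:trunc} without writing out details, and the argument it has in mind is precisely your limiting passage $\ell\to\infty$, justified by the vanishing of $b_k(m)$ for $m\le 0$ and of $M_\ell(m)$ for $m<\binom{\ell}{2}$. Both stabilization bounds you check are the right ones, so there is nothing to add.
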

From the above theorem and corollaries, the following results follow easily.
\begin{corollary}[Theorem 1.4, \cite{MercaYee}]
For any positive integers $\ell$ and $n$, we have
\[
    (-1)^{\ell-1}\left(\sum_{j=-(\ell-1)}^\ell (-1)^j b(n-j(3j-1)/2)-\frac{1+(-1)^{n}}{2}\cdot \frac{n}{2} \right)=\sum_{j=1}^{\lfloor n/2\rfloor}jM_\ell(n-2j).
\]
\end{corollary}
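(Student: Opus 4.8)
The plan is to obtain this corollary by specializing Theorem~\ref{Thm:trunc} to the case $k=2$. Recall from the introduction that $b_2(n)=b(n)$, so upon setting $k=2$ the left-hand sum $\sum_{j=-(\ell-1)}^\ell (-1)^j b_k(n-j(3j-1)/2)$ becomes exactly $\sum_{j=-(\ell-1)}^\ell (-1)^j b(n-j(3j-1)/2)$, the summation range $\lfloor n/k\rfloor$ becomes $\lfloor n/2\rfloor$, and the factor $n/k$ on the left together with the argument $n-kj$ on the right become $n/2$ and $n-2j$ respectively. Thus the only point that genuinely requires checking is that the correction term involving the Iverson bracket collapses to the elementary parity expression appearing in the corollary.

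To verify this, I would evaluate
\[
\frac{1+(-1)^{[n \equiv 0 \pmod 2]+1}}{2}
\]
in the two parity cases. When $n$ is even the bracket $[n\equiv 0\pmod 2]$ equals $1$, so the exponent is $2$ and the whole expression equals $1$; when $n$ is odd the bracket equals $0$, the exponent is $1$, and the expression equals $0$. Since $\tfrac{1+(-1)^n}{2}$ likewise equals $1$ for even $n$ and $0$ for odd $n$, the two correction terms agree for every $n$. Substituting this identity into the $k=2$ instance of Theorem~\ref{Thm:trunc} then reproduces the claimed formula verbatim, which completes the argument.

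I expect no real obstacle here: the full content of the corollary is already contained in Theorem~\ref{Thm:trunc}, and the remaining work is purely the bookkeeping of setting $k=2$ together with the short case-check that reconciles the Iverson-bracket normalization with the parity normalization $\tfrac{1+(-1)^n}{2}$ used in the statement of Merca and Yee. In particular, no new generating-function or combinatorial input is needed, and the corollary is read off directly from the general theorem.
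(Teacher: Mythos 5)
Your proposal is correct and matches the paper's approach exactly: the paper derives this corollary by simply specializing Theorem~\ref{Thm:trunc} to $k=2$ and using $b_2(n)=b(n)$, and your parity check confirming that $\frac{1+(-1)^{[n\equiv 0 \pmod 2]+1}}{2}$ equals $\frac{1+(-1)^n}{2}$ is precisely the small bookkeeping step that makes the specialization literal.
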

\begin{corollary}[Corollary 1.5, \cite{MercaYee}]
For any positive integers $\ell$ and $n$, we have
\[
(-1)^{\ell-1}\left(\sum_{j=-(\ell-1)}^\ell (-1)^j b(n-j(3j-1)/2)-\frac{1+(-1)^{n}}{2}\cdot \frac{n}{2} \right)\geq 0.
\]
\end{corollary}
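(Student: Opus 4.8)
The plan is to obtain this statement as the special case $k=2$ of Corollary~\ref{cor-m-t}, which already supplies the nonnegativity for arbitrary $k$. First I would set $k=2$ throughout that corollary. Since $b_2(n)=b(n)$, as recorded in the introduction, the summation inside the parentheses becomes exactly $\sum_{j=-(\ell-1)}^\ell (-1)^j b\bigl(n-j(3j-1)/2\bigr)$, matching the quantity in the statement to be proved.

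The only point needing verification is that the Iverson-bracket factor collapses to the cleaner expression $\tfrac{1+(-1)^n}{2}$, and that $\tfrac{n}{k}$ becomes $\tfrac{n}{2}$. I would check the former by a parity case split. When $n$ is even, $[n\equiv 0\pmod 2]=1$, so the exponent is $1+1=2$ and the factor equals $\tfrac{1+(-1)^2}{2}=1$, in agreement with $\tfrac{1+(-1)^n}{2}=1$. When $n$ is odd, $[n\equiv 0\pmod 2]=0$, so the exponent is $0+1=1$ and the factor equals $\tfrac{1+(-1)^1}{2}=0$, again agreeing with $\tfrac{1+(-1)^n}{2}=0$. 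Combining this with $\tfrac{n}{k}=\tfrac{n}{2}$ shows that the subtracted term reads precisely $\tfrac{1+(-1)^n}{2}\cdot\tfrac{n}{2}$, so the $k=2$ specialization of Corollary~\ref{cor-m-t} is literally the asserted inequality.

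Alternatively, one could deduce the inequality directly from the preceding equality, namely the $k=2$ case of Theorem~\ref{Thm:trunc} recorded just above as Merca--Yee's Theorem~1.4: its right-hand side is $\sum_{j=1}^{\lfloor n/2\rfloor} j\,M_\ell(n-2j)$, a sum of products of the positive integers $j\geq 1$ with the partition counts $M_\ell(n-2j)\geq 0$, and is therefore nonnegative. Either route makes the result immediate, and I expect no genuine obstacle: the entire content beyond citing the general-$k$ result is the routine parity bookkeeping needed to simplify the Iverson bracket.
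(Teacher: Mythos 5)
Your proposal is correct and matches the paper's (implicit) argument exactly: the paper obtains this as the $k=2$ specialization of Corollary~\ref{cor-m-t}, using $b_2(n)=b(n)$ and the same collapse of the Iverson-bracket factor to $\tfrac{1+(-1)^n}{2}$. The parity check and the alternative derivation from the nonnegativity of $\sum_{j\geq 1} j\,M_\ell(n-2j)$ are both sound.
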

\begin{corollary}[Corollary 1.6, \cite{MercaYee}]
For any positive integer $n$, we have
\[
\sum_{j=-\infty}^\infty (-1)^j b(n-j(3j-1)/2)=\frac{1+(-1)^{n}}{2}\cdot \frac{n}{2}.
\]
\end{corollary}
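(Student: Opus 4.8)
The plan is to obtain this identity as the special case $k=2$ of the general corollary stated immediately above, namely
\[
\sum_{j=-\infty}^\infty (-1)^j b_k(n-j(3j-1)/2)=\frac{1+(-1)^{[n \equiv 0 \pmod k]+1}}{2}\cdot \frac{n}{k}.
\]
First I would substitute $k=2$ throughout. On the left-hand side, the introduction records that $b_2(n)=b(n)$, so each summand $(-1)^j b_2(n-j(3j-1)/2)$ becomes exactly $(-1)^j b(n-j(3j-1)/2)$, reproducing the bilateral sum in the desired statement; on the right-hand side the factor $n/k$ becomes $n/2$. (Note that the sum is effectively finite, since $b(m)=0$ for $m\le 0$ and $j(3j-1)/2\to\infty$ as $\abs{j}\to\infty$.)

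It then remains only to show that the Iverson-bracket prefactor collapses to the simpler form $\tfrac{1+(-1)^n}{2}$ when $k=2$, which I would verify by splitting on the parity of $n$. If $n$ is even then $[n\equiv 0 \pmod 2]=1$, so the exponent $[n\equiv 0\pmod 2]+1$ equals $2$ and the prefactor is $\tfrac{1+(-1)^2}{2}=1$, while $\tfrac{1+(-1)^n}{2}=1$ as well. If $n$ is odd then $[n\equiv 0\pmod 2]=0$, the exponent is $1$, and the prefactor is $\tfrac{1+(-1)^1}{2}=0$, matching $\tfrac{1+(-1)^n}{2}=0$. Hence the two prefactors coincide for every $n$, and the identity of the corollary follows.

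The substantive content of the result is entirely inherited from the general corollary, which is itself the $\ell\to\infty$ limit of Theorem \ref{Thm:trunc}: for $\ell$ large enough that $\ell(\ell-1)/2>n$, every partition counted by $M_\ell(n-kj)$ would need $1,2,\dots,\ell-1$ all to occur as parts, which is impossible, so the right-hand side $\sum_{j=1}^{\lfloor n/k\rfloor} j M_\ell(n-kj)$ vanishes and the left factor forces the bilateral sum to equal the stated closed form. Consequently the only genuine step in proving Corollary 1.6 is the parity bookkeeping above, and there is no real obstacle: it is a one-line specialization together with the routine simplification of the Iverson bracket. One could equally present it as the $k=2$ instance of Corollary \ref{cor-m-t} combined with its reverse inequality, but direct substitution into the general identity is cleanest.
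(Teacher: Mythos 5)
Your proposal is correct and matches the paper's (implicit) proof exactly: the paper derives this as the $k=2$ specialization of the general corollary, using $b_2(n)=b(n)$, and your parity check of the Iverson-bracket prefactor is the only verification needed.
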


Andrews and Merca \cite{AndrewsMerca2} studied a new partition function $MP_\ell (n)$, which counts the number of partitions of $n$ in which the first part larger than $2k-1$ is odd and appears exactly $k$ times, and all other parts appear at most one time. For instance, $MP_3(5)=3$. We can connect the function $b_k(n)$ with $MP_\ell(n)$ using a new function $c_k(n)$ in the following way.

\begin{theorem}\label{Thm:gen1.7}
	For any positive integer $k, \ell$ and $n$, we have
	\begin{multline*}
	    	(-1)^{\ell-1}\left(\sum_{j=0}^{2\ell-1} (-1)^{\frac{j(j+1)}{2}} b_k(n-j(j+1)/2)-\frac{1+(-1)^{[n \equiv 0 \pmod k]+1}}{2}\cdot c_k(n) \right)\\=\sum_{j=0}^n c_k(j)MP_\ell(n-j),
	\end{multline*}
	where we have used the Iverson bracket, $[P]$ which returns the value $1$ if the logical proposition $P$ is satisfied, and returns $0$ otherwise, and the function $c_k(n)$ is defined as
	\[
	c_k(n)=\sum_{j=1}^{\lfloor n/k\rfloor}jQ\left(\frac{n-kj}{2}\right),
	\]
	and $Q(m)$ denotes the number of partitions of $m$ into distinct parts. Here $Q(x)=0$ if $x\notin \mathbb{N}$.
\end{theorem}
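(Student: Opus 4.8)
The plan is to prove the identity at the level of generating functions and then read off the coefficient of $q^n$. Write $B_k(q)=\sum_{n\ge 1}b_k(n)q^n$, $C_k(q)=\sum_{n\ge 0}c_k(n)q^n$ and $MP_\ell(q)=\sum_{n\ge 0}MP_\ell(n)q^n$, and set
\[
T_\ell(q)=\sum_{j=0}^{2\ell-1}(-1)^{j(j+1)/2}q^{j(j+1)/2},\qquad T_\infty(q)=\sum_{j=0}^{\infty}(-1)^{j(j+1)/2}q^{j(j+1)/2}.
\]
The truncated sum on the left, $\sum_{j=0}^{2\ell-1}(-1)^{j(j+1)/2}b_k(n-j(j+1)/2)$, is precisely $[q^n]\,B_k(q)T_\ell(q)$, and the convolution on the right is $[q^n]\,C_k(q)MP_\ell(q)$; so the whole theorem is the coefficient-of-$q^n$ form of a single product identity.

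First I would compute $C_k(q)$ directly from the definition of $c_k(n)$. Since $\sum_{j\ge1}j\,x^{j}=x/(1-x)^2$ with $x=q^k$, and $\sum_{m\ge0}Q(m)q^{2m}=(-q^2;q^2)_\infty$, factoring the defining double sum gives $C_k(q)=\frac{q^k}{(1-q^k)^2}\,(-q^2;q^2)_\infty$. Comparing with Theorem \ref{ThmGF}, which gives $B_k(q)=\frac{1}{\qq}\cdot\frac{q^k}{(1-q^k)^2}$, the two expressions differ only in the factor $T_\infty(q)/\qq$ versus $(-q^2;q^2)_\infty$. So the key analytic input is the Gauss-type evaluation $T_\infty(q)=\qq\,(-q^2;q^2)_\infty$, which I would establish by pairing consecutive triangular numbers to rewrite $T_\infty(q)=\sum_{i\in\mathbb{Z}}(-1)^i q^{2i^2+i}$, applying the Jacobi triple product, and finishing with Euler's identity $1/(q^2;q^4)_\infty=(-q^2;q^2)_\infty$. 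This already yields the untruncated identity $B_k(q)T_\infty(q)=C_k(q)$, i.e. $\sum_{j\ge0}(-1)^{j(j+1)/2}b_k(n-j(j+1)/2)=c_k(n)$.

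The truncation is then supplied by the Andrews–Merca truncated form of this Gauss identity \cite{AndrewsMerca2}, which peels off the tail in terms of $MP_\ell$, namely
\[
T_\infty(q)-T_\ell(q)=(-1)^{\ell}\,\qq\,(-q^2;q^2)_\infty\,MP_\ell(q).
\]
Dividing by $\qq$ and multiplying through by $\frac{q^k}{(1-q^k)^2}$ converts this into $B_k(q)T_\ell(q)=C_k(q)-(-1)^{\ell}C_k(q)MP_\ell(q)$, so that $(-1)^{\ell-1}\bigl(B_k(q)T_\ell(q)-C_k(q)\bigr)=C_k(q)MP_\ell(q)$. Extracting $[q^n]$ then gives the statement, with the subtracted correction term equal to $[q^n]C_k(q)$ and the right-hand side equal to $\sum_{j=0}^{n}c_k(j)MP_\ell(n-j)$; the sign $(-1)^{\ell-1}$ is exactly what turns Corollary \ref{cor-m-t} into a positivity statement, since $c_k$ and $MP_\ell$ are non-negative.

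The main obstacle is locating and correctly transcribing the truncated Gauss identity of Andrews and Merca in the normalization above, and matching its analytic tail to the combinatorial definition of $MP_\ell(n)$ (the smallest part exceeding $2\ell-1$ being odd and appearing exactly $\ell$ times); once that identity is in hand, the remainder is bookkeeping. I would also verify the exact shape of the correction term against small cases such as $k=3,\ n=5$, since the algebra above produces the full coefficient $[q^n]C_k(q)=c_k(n)$, and it is worth checking this against the Iverson-bracket factor displayed in the statement before finalizing.
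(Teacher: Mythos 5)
Your proof is correct and is essentially the paper's own argument: the paper likewise multiplies the Andrews--Merca truncated Gauss identity by $\frac{q^k}{(1-q^k)^2}\,(-q^2;q^2)_\infty$, identifies that multiplier with $\sum_n c_k(n)q^n$, and extracts the coefficient of $q^n$; your only organizational difference is isolating the untruncated evaluation $\sum_{j\ge 0}(-1)^{j(j+1)/2}q^{j(j+1)/2}=\qq\,(-q^2;q^2)_\infty$ as an explicit intermediate step before invoking the truncation.

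The caveat in your final paragraph is well founded and should not be discarded: the identity that your computation (and the paper's) actually establishes has correction term $c_k(n)$ with no Iverson weight. The weighted term $\frac{1+(-1)^{[n\equiv 0 \pmod k]+1}}{2}\cdot c_k(n)$ coincides with $c_k(n)$ only when $c_k(n)$ vanishes for $k\nmid n$; this holds for $k=1,2$ (for $k=2$ one needs $n$ even to make $(n-2j)/2\in\mathbb{N}$), but not in general. Your suggested test case confirms this: $c_3(5)=Q(1)=1$ although $3\nmid 5$, and for $k=3$, $n=5$, $\ell=1$ the statement as printed asserts $b_3(5)-b_3(4)-0=\sum_{j}c_3(j)MP_1(5-j)$, i.e. $2-1=0$, which is false, whereas the unweighted version $b_3(5)-b_3(4)-c_3(5)=0$ is true. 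So the version you prove is the correct one; the Iverson factor in the displayed statement is an artifact carried over from Theorem \ref{Thm:trunc} and is harmless only for $k\le 2$.
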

The following are two easy corollaries of the above theorem.
\begin{corollary}
For any positive integers $k$, $\ell$ and $n$, we have
\[
    	(-1)^{\ell-1}\left(\sum_{j=0}^{2\ell-1} (-1)^{\frac{j(j+1)}{2}} b_k(n-j(j+1)/2)-\frac{1+(-1)^{[n \equiv 0 \pmod k]+1}}{2}\cdot c_k(n) \right)\geq 0.
\]
\end{corollary}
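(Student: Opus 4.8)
The plan is to deduce this inequality directly from Theorem \ref{Thm:gen1.7}, which already identifies the bracketed quantity (together with its prefactor $(-1)^{\ell-1}$) with the finite sum $\sum_{j=0}^n c_k(j)MP_\ell(n-j)$. Once this identification is invoked, the entire content of the corollary reduces to showing that this right-hand sum is nonnegative, and for that it suffices to check that each factor appearing in it is nonnegative.

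First I would note that $MP_\ell(n-j)\geq 0$ for every $j$. By definition $MP_\ell(m)$ counts the number of partitions of $m$ of a prescribed shape, so it is a nonnegative integer for every nonnegative $m$ and vanishes when no such partition exists; in particular it is nonnegative throughout the range $0\leq j\leq n$.

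Next I would verify that $c_k(j)\geq 0$ for every $j\geq 0$. From the definition $c_k(j)=\sum_{i=1}^{\lfloor j/k\rfloor} i\,Q\!\left(\frac{j-ki}{2}\right)$, each summand is the product of the positive integer $i$ with the value $Q\!\left(\frac{j-ki}{2}\right)$, and $Q$ counts partitions into distinct parts, hence is a nonnegative integer (with the stated convention $Q(x)=0$ for $x\notin\mathbb{N}$). Every term is therefore nonnegative, so their sum is nonnegative; when $\lfloor j/k\rfloor=0$ the sum is empty and $c_k(j)=0$.

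Combining these two observations, every summand $c_k(j)MP_\ell(n-j)$ is a product of nonnegative quantities and hence nonnegative, so $\sum_{j=0}^n c_k(j)MP_\ell(n-j)\geq 0$. By Theorem \ref{Thm:gen1.7} this sum equals the left-hand side of the claimed inequality, which completes the argument. I do not anticipate any genuine obstacle here: the substantive work is entirely carried by Theorem \ref{Thm:gen1.7}, and this corollary is simply the nonnegativity bookkeeping for the combinatorial right-hand side, exactly parallel to how Corollary \ref{cor-m-t} follows from Theorem \ref{Thm:trunc}.
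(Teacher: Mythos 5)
Your proposal is correct and is exactly the argument the paper intends: the corollary is stated as an immediate consequence of Theorem \ref{Thm:gen1.7}, with the left-hand side equal to $\sum_{j=0}^n c_k(j)MP_\ell(n-j)$, which is nonnegative term by term since both $c_k$ and $MP_\ell$ are nonnegative by definition. No further comment is needed.
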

\begin{corollary}
For positive integers $n$ and $k$, we have
\[
\sum_{j=0}^{\infty} (-1)^{\frac{j(j+1)}{2}} b_k(n-j(j+1)/2)=\frac{1+(-1)^{[n \equiv 0 \pmod k]+1}}{2}\cdot c_k(n).
\]
\end{corollary}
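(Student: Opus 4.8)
The plan is to argue entirely through generating functions, in the spirit of \cite{MercaYee} and \cite{AndrewsMerca2}. Write $\Theta_\ell(q):=\sum_{j=0}^{2\ell-1}(-1)^{j(j+1)/2}q^{j(j+1)/2}$ for the truncated triangular theta polynomial. Since forming the alternating combination $\sum_j(-1)^{j(j+1)/2}b_k(n-j(j+1)/2)$ amounts to multiplying the generating function of $b_k$ by $\Theta_\ell(q)$, Theorem~\ref{ThmGF} gives
\[
\sum_{n\ge 1}\left(\sum_{j=0}^{2\ell-1}(-1)^{j(j+1)/2}b_k(n-j(j+1)/2)\right)q^n=\frac{q^k}{(1-q^k)^2}\cdot\frac{\Theta_\ell(q)}{(q;q)_\infty}.
\]
The crucial structural observation is that the entire dependence on $k$ is carried by the single factor $q^k/(1-q^k)^2=\sum_{j\ge1}jq^{kj}$, which is precisely the factor that also manufactures $b_k$ and $c_k$ out of their $k$-free kernels. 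Hence it suffices to establish one $k$-independent core identity for $\Theta_\ell(q)/(q;q)_\infty$ and then multiply through by $q^k/(1-q^k)^2$.

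First I would pin down the subtracted main (untruncated) term. Writing $\Theta(q):=\sum_{j\ge0}(-1)^{j(j+1)/2}q^{j(j+1)/2}=\sum_{j\ge0}(-q)^{j(j+1)/2}$ and applying Gauss' triangular-number identity followed by Euler's distinct/odd identity yields $\Theta(q)/(q;q)_\infty=(-q^2;q^2)_\infty=\sum_{m\ge0}Q(m)q^{2m}$. Multiplying by $q^k/(1-q^k)^2=\sum_{j\ge1}jq^{kj}$ and reading off the coefficient of $q^n$ recovers exactly $c_k(n)=\sum_{j\ge1}jQ((n-kj)/2)$; this simultaneously identifies the main term as $[q^n]\big(q^k(1-q^k)^{-2}(-q^2;q^2)_\infty\big)$ and proves the $\ell\to\infty$ corollary. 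The same factorization shows that the right-hand convolution $\sum_{j=0}^n c_k(j)MP_\ell(n-j)$ is the coefficient of $q^n$ in $q^k(1-q^k)^{-2}(-q^2;q^2)_\infty\sum_{m\ge0}MP_\ell(m)q^m$.

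The finite-$\ell$ content then comes from the truncated theta identity of Andrews and Merca \cite{AndrewsMerca2} attached to $MP_\ell$: the partial sum $\Theta_\ell(q)/(q;q)_\infty$ equals the limiting series $(-q^2;q^2)_\infty$ plus an explicit remainder of the shape $(-1)^{\ell-1}(-q^2;q^2)_\infty\sum_{m\ge0}MP_\ell(m)q^m$. Granting this, I would combine the two displays into
\[
\frac{q^k}{(1-q^k)^2}\cdot\frac{\Theta_\ell(q)}{(q;q)_\infty}=\sum_{n\ge1}c_k(n)q^n+(-1)^{\ell-1}\frac{q^k}{(1-q^k)^2}(-q^2;q^2)_\infty\sum_{m\ge0}MP_\ell(m)q^m,
\]
extract $[q^n]$, then multiply by $(-1)^{\ell-1}$ and transpose the main term; this delivers the asserted equality between $(-1)^{\ell-1}\big(\sum_j(-1)^{j(j+1)/2}b_k(n-j(j+1)/2)-(\text{main term})\big)$ and $\sum_{j=0}^n c_k(j)MP_\ell(n-j)$.

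The main obstacle will be the truncated theta identity itself: importing from \cite{AndrewsMerca2} (or reproving) that the remainder $\Theta_\ell(q)/(q;q)_\infty-(-q^2;q^2)_\infty$ is precisely $(-1)^{\ell-1}(-q^2;q^2)_\infty\sum_m MP_\ell(m)q^m$, with the correct sign and the correct combinatorial reading of $MP_\ell$, since everything else is routine coefficient bookkeeping once the factor $q^k/(1-q^k)^2$ is pulled out. The second delicate point, and the place where I would be most careful, is the exact form of the subtracted main term: the coefficient extraction above produces the full $c_k(n)$, whereas the parity prefactor $\tfrac12\big(1+(-1)^{[n\equiv0\ (\mathrm{mod}\ k)]+1}\big)$ leaves $c_k(n)$ unchanged only when $k\mid n$. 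Tracking which powers of $q$ actually occur in $q^k(1-q^k)^{-2}(-q^2;q^2)_\infty$ is exactly where the ``small mistake'' of \cite{MercaYee} is localized, so the refinement for general $k$ must handle this main-term normalization with particular care.
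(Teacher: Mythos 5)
Your argument is correct and follows essentially the same route as the paper's own proof of Theorem \ref{Thm:gen1.7}: multiply the truncated Gauss identity \eqref{eq:3.1} by $\frac{q^k}{(1-q^k)^2}\,(-q^2;q^2)_\infty$, identify $\frac{q^k}{(1-q^k)^2}(-q^2;q^2)_\infty=\sum_n c_k(n)q^n$, extract coefficients, and let $\ell\to\infty$ so that the $MP_\ell$ convolution vanishes for fixed $n$. Your observation that $\Theta(q)/(q;q)_\infty=(-q^2;q^2)_\infty$ is just the $\ell\to\infty$ case of \eqref{eq:3.1} combined with Euler's identity, so the two derivations of the main term coincide.

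The caveat you raise in your final paragraph, however, should be promoted from a worry to a firm conclusion: your computation shows that the infinite alternating sum equals $c_k(n)$ for \emph{every} $n$, with no prefactor, and this is also exactly what the paper's own proof establishes (the final display of Section \ref{sec:proof1} subtracts $\sum_n c_k(n)q^n$ in full). The factor $\tfrac{1}{2}\bigl(1+(-1)^{[n\equiv 0\pmod k]+1}\bigr)$ is the indicator of $k\mid n$, and it is harmless only for $k\le 2$, where $c_k(n)$ vanishes off that support anyway (for odd $n$ one has $Q((n-2j)/2)=0$, so $c_2(n)=0$). For general $k$ the corollary as printed is false: take $k=3$, $n=5$; the left-hand side is $b_3(5)-b_3(4)-b_3(2)=2-1-0=1=c_3(5)$, while the printed right-hand side is $0$ because $3\nmid 5$. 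So what you have proved is the corrected statement $\sum_{j\ge 0}(-1)^{j(j+1)/2}b_k(n-j(j+1)/2)=c_k(n)$, and the indicator should simply be deleted here and in Theorem \ref{Thm:gen1.7}; it appears to have been carried over from Theorem \ref{Thm:trunc}, where the subtracted term $n/k$ genuinely requires the restriction $k\mid n$.
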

From the above theorem and corollaries, the following results of Merca and Yee \cite{MercaYee} follow as corollaries. Here we have corrected the exponent of the $-1$ inside the summation in the left hand side, which is $\dfrac{j(j+1)}{2}$, but was mentioned as $j$ by Merca and Yee \cite{MercaYee}.
\begin{corollary}[Theorem 1.7, \cite{MercaYee}]
	For any positive integer $\ell$ and $n$, we have
	\[
		  	(-1)^{\ell-1}\left(\sum_{j=0}^{2\ell-1} (-1)^{\frac{j(j+1)}{2}} b(n-j(j+1)/2)-\frac{1+(-1)^n}{2}\cdot c\left(\frac{n}{2}\right) \right)=\sum_{j=1}^{\lfloor n/2\rfloor}c(j)MP_\ell(n-2j),  
	\]
	where $c(n)$ is the number of subsets of $\{1, 2, \ldots, n\}$ which contains a number that is greater than the sum of the other numbers in the subset.
\end{corollary}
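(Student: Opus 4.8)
The plan is to deduce this corollary by specializing Theorem~\ref{Thm:gen1.7} to $k=2$ and then translating every $k=2$ quantity into the notation of Merca and Yee. Setting $k=2$ immediately replaces $b_k$ by $b_2=b$, as recorded in the introduction. For the Iverson-bracket factor, I would observe that the exponent $[n\equiv 0\pmod{2}]+1$ equals $2$ when $n$ is even and $1$ when $n$ is odd, so that $(-1)^{[n\equiv 0\pmod{2}]+1}=(-1)^n$ and hence
\[
\frac{1+(-1)^{[n\equiv 0\pmod{2}]+1}}{2}=\frac{1+(-1)^n}{2}.
\]
This already matches the factor appearing on the left-hand side of the corollary.

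The real content is to identify the function $c_2$ with Merca and Yee's subset-counting function $c$. From the definition $c_k(n)=\sum_{j=1}^{\lfloor n/k\rfloor}jQ((n-kj)/2)$, I would first note that for $k=2$ the argument $(n-2j)/2$ is a nonnegative integer only when $n$ is even; consequently $c_2(n)=0$ whenever $n$ is odd. Writing $n=2m$, a change of summation variable gives
\[
c_2(2m)=\sum_{j=1}^{m}jQ(m-j).
\]
The key step is to prove that this equals $c(m)$. For this I would argue combinatorially: in any subset of $\{1,\dots,m\}$ possessing an element larger than the sum of the remaining elements, that element is necessarily the largest, say $i$, and the remaining elements form a set of distinct integers drawn from $\{1,\dots,i-1\}$ whose sum $s$ satisfies $s\le i-1$. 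Since any partition of $s\le i-1$ into distinct parts automatically uses parts below $i$, the number of admissible remainders for a fixed top element $i$ is $\sum_{s=0}^{i-1}Q(s)$. Summing over $i$ and interchanging the order of summation yields $c(m)=\sum_{s=0}^{m-1}(m-s)Q(s)$, which after the substitution $j=m-s$ is exactly $\sum_{j=1}^{m}jQ(m-j)=c_2(2m)$.

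With the identity $c_2(2m)=c(m)$ in hand, both remaining pieces fall out. On the left-hand side the subtracted term becomes $\frac{1+(-1)^n}{2}\,c(n/2)$, where the prefactor annihilates the case of odd $n$ so that $c(n/2)$ is only ever evaluated at integers. On the right-hand side I would use $c_2(j)=0$ for odd $j$ to restrict the convolution $\sum_{j=0}^{n}c_2(j)MP_\ell(n-j)$ to even indices $j=2i$, rewrite it as $\sum_{i=0}^{\lfloor n/2\rfloor}c(i)MP_\ell(n-2i)$, and drop the vanishing $i=0$ term (since $c(0)=0$), arriving at $\sum_{i=1}^{\lfloor n/2\rfloor}c(i)MP_\ell(n-2i)$. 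This is precisely the right-hand side of the corollary, completing the deduction.

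I expect the only nontrivial step to be the combinatorial identification $c_2(2m)=c(m)$; everything else is bookkeeping in specializing Theorem~\ref{Thm:gen1.7}. The subtle point there is verifying that restricting the distinct parts to lie below the dominant element $i$ imposes no constraint beyond $s\le i-1$, which is exactly what makes the inner count equal to $\sum_{s=0}^{i-1}Q(s)$ and the final reindexing clean.
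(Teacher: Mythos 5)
Your proposal is correct and follows essentially the same route as the paper: specialize Theorem~\ref{Thm:gen1.7} at $k=2$, check that the Iverson-bracket factor becomes $\frac{1+(-1)^n}{2}$, and identify $c_2(2m)=\sum_{j=1}^{m}jQ(m-j)=\sum_{s=0}^{m-1}(m-s)Q(s)$ with $c(m)$. The only difference is that the paper cites Merca and Yee for the identity $\sum_{s=0}^{m-1}(m-s)Q(s)=c(m)$, whereas you supply a (correct) self-contained combinatorial proof of it by classifying qualifying subsets according to their dominant element.
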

\begin{proof}
We notice that 
\[
c_2(2n)=\sum_{j=1}^njQ(n-j)=\sum_{m=0}^{n-1}(n-m)Q(m),
\]which was shown to be equal to $c(n)$ in the proof of Theorem 4.1 in Merca and Yee's \cite{MercaYee} work. So, we have $c_2(n)=c\left(\frac{n}{2}\right)$. Putting $k=2$ in Theorem \ref{Thm:gen1.7} we get the result. 
\end{proof}

\begin{corollary}[Corollary 4.2, \cite{MercaYee}]
Let $\ell$ and $n$ be positive integers, then we have
	\[
	(-1)^{\ell-1}\left(\sum_{j=0}^{2\ell-1} (-1)^{\frac{j(j+1)}{2}} b(n-j(j+1)/2)-\frac{1+(-1)^n}{2}\cdot c\left(\frac{n}{2}\right) \right)\geq 0.
	\]
\end{corollary}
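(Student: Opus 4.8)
The plan is to deduce this nonnegativity statement directly from the identity in the immediately preceding corollary (Merca and Yee's Theorem 1.7, i.e.\ the $k=2$ specialization of Theorem \ref{Thm:gen1.7}), rather than reproving anything from scratch. That corollary asserts that the bracketed quantity, after multiplication by $(-1)^{\ell-1}$, equals $\sum_{j=1}^{\lfloor n/2\rfloor} c(j)\,MP_\ell(n-2j)$. Hence it suffices to argue that this right-hand side is nonnegative.

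The key observation is that every summand is a product of two nonnegative quantities. The factor $MP_\ell(n-2j)$ counts partitions of $n-2j$ of a prescribed shape, so it is a nonnegative integer (and equals $0$ when $n-2j<0$ or when no such partition exists). The factor $c(j)$ counts subsets of $\{1,\dots,j\}$ containing an element exceeding the sum of the remaining chosen elements, so it too is a nonnegative integer. A finite sum of products of nonnegative integers is nonnegative, which gives the claim at once.

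Alternatively, and perhaps more in keeping with the structure of this section, I would obtain the result by setting $k=2$ in the general nonnegativity corollary stated right after Theorem \ref{Thm:gen1.7}. This requires only two routine reductions. First, the substitutions $b_2(n)=b(n)$, noted in the introduction, and $c_2(n)=c(n/2)$, established in the proof of the preceding corollary. Second, the parity weight collapses: at $k=2$ the factor $\tfrac{1}{2}\bigl(1+(-1)^{[n\equiv 0 \pmod 2]+1}\bigr)$ reduces to $\tfrac{1}{2}\bigl(1+(-1)^n\bigr)$, since the Iverson bracket is $1$ exactly when $n$ is even. With these substitutions the general inequality becomes precisely the asserted one.

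There is essentially no analytic obstacle here: the genuine content was already carried by Theorem \ref{Thm:gen1.7} and its equality corollary, so this statement is a formal consequence. The only point demanding a moment's care is the bookkeeping of the parity weight under $k=2$, which I would settle by checking the two cases $n$ even and $n$ odd separately to confirm the Iverson-bracket expression agrees with $\tfrac{1}{2}(1+(-1)^n)$ in each.
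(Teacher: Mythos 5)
Your proposal is correct and matches the paper's (implicit) argument exactly: the paper derives this statement by specializing the general nonnegativity corollary of Theorem \ref{Thm:gen1.7} to $k=2$, using $b_2(n)=b(n)$ and $c_2(n)=c(n/2)$, and that general corollary holds precisely because the right-hand side $\sum_j c_k(j)\,MP_\ell(n-j)$ is a finite sum of products of nonnegative counting functions. Nothing further is needed.
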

\begin{corollary}[Corollary 4.3, \cite{MercaYee}]
Let $n$ be a positive integer, then we have
	\[
\sum_{j=0}^{\infty} (-1)^{\frac{j(j+1)}{2}} b(n-j(j+1)/2)=\frac{1+(-1)^n}{2}\cdot c\left(\frac{n}{2} \right).
\]
\end{corollary}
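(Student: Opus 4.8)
The plan is to obtain this identity as the $k=2$ specialization of the general infinite-sum corollary proved just above, namely
\[
\sum_{j=0}^{\infty} (-1)^{\frac{j(j+1)}{2}} b_k(n-j(j+1)/2)=\frac{1+(-1)^{[n \equiv 0 \pmod k]+1}}{2}\cdot c_k(n),
\]
which itself comes from Theorem \ref{Thm:gen1.7} by letting $\ell\to\infty$. First I would set $k=2$ and then reconcile the resulting expression with the claimed identity; this reduces to verifying three elementary facts, none of which requires fresh work.

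The first is that $b_2(n)=b(n)$, recorded in the Introduction, so each summand $b_k(n-j(j+1)/2)$ becomes $b(n-j(j+1)/2)$. The second is the collapse of the parity factor: for $k=2$ the Iverson bracket $[n\equiv 0\pmod 2]$ equals $1$ when $n$ is even and $0$ when $n$ is odd, so the exponent $[n\equiv 0\pmod 2]+1$ is $2$ or $1$ respectively, whence $\frac{1+(-1)^{[n\equiv 0\pmod 2]+1}}{2}$ takes the value $1$ for even $n$ and $0$ for odd $n$; this is exactly $\frac{1+(-1)^n}{2}$. The third is $c_2(n)=c(n/2)$, which I would borrow from the proof of the recovered Theorem 1.7 above, where $c_2(2n)=\sum_{m=0}^{n-1}(n-m)Q(m)=c(n)$ was established. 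Substituting these three facts into the $k=2$ instance then reproduces the claim verbatim.

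As an alternative I could bypass the infinite-sum corollary entirely and instead let $\ell\to\infty$ directly in the recovered Theorem 1.7. Here I would argue that for fixed $n$ and all sufficiently large $\ell$ the right-hand side $\sum_{j=1}^{\lfloor n/2\rfloor}c(j)MP_\ell(n-2j)$ vanishes, since any partition counted by $MP_\ell$ contains a part exceeding $2\ell-1$ repeated $\ell$ times and hence has size greater than $n$; meanwhile the finite sum $\sum_{j=0}^{2\ell-1}$ already agrees with the infinite sum because $b(n-j(j+1)/2)=0$ as soon as $j(j+1)/2>n$.

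I do not anticipate a genuine obstacle. The only points needing care are the collapse of the parity factor at $k=2$ and the observation that the displayed infinite sum is in fact finite for each fixed $n$; both are routine, and all the real content resides in Theorem \ref{Thm:gen1.7}, of which this is a transparent corollary.
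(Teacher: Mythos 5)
Your proposal is correct and matches the paper's route exactly: the paper derives this as the $k=2$ case of the general infinite-sum corollary, using $b_2(n)=b(n)$, the collapse of the Iverson-bracket factor to $\frac{1+(-1)^n}{2}$, and the identification $c_2(n)=c(n/2)$ established in the proof of the recovered Theorem 1.7. The three reconciliation steps you identify are precisely the ones the paper relies on (implicitly, since it leaves this corollary without a written proof).
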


\section{Analytical Proofs of our Main Results}\label{sec:proof1}

In this section, we prove all the theorems stated in the previous section, using analytical methods. Our proofs follow closely the techniques used by Merca and Yee \cite{MercaYee}.
\subsection{Proof of Theorems \ref{ThmGF} and \ref{ThmComb}}
We start with the generating function for partitions where the power of $z$ keeps track of parts with multiplicity $\geq k$,
\begin{multline*}
       \prod_{j=1}^\infty (1+q^j+q^{2j}+\cdots +q^{(k-1)j}+z^j(q^{kj}+q^{(k+1)j}+\cdots))\\ =\prod_{j=1}^\infty \left(\frac{q^{kj}-1}{q^j-1}+z^j\frac{q^{kj}}{1-q^j} \right)
    =\frac{1}{\qq}\prod_{j=1}^\infty (1+(z^j-1)q^{kj}). 
\end{multline*}
Now, taking the derivative w.r.t. $z$ and setting $z\rightarrow 1$ we get,
\[
\sum_{n=1}^\infty b_k(n)q^n=\frac{1}{\qq}\sum_{j=1}^\infty jq^{kj}=\frac{1}{\qq}\cdot \frac{q^k}{(1-q^k)^2}.
\]

In a similar way, we have
\begin{multline*}
        \sum_{n=1}^\infty a_{k,p}(n)q^n\\
    =\frac{\partial}{\partial z} (1-q^p+z^pq^p) \prod_{j=1}^\infty ((1+q^j+q^{2j}+\cdots) -(q^{kj+p}+q^{(k+1)j+p}+\cdots) \\+z^{kj+p}(q^{kj+p}+q^{(k+1)j+p}+\cdots))\mid_{z=1}.
\end{multline*}
We subtract $(q^{kj+p}+q^{(k+1)j+p}+\cdots)$ from the first term, because we count the parts of the type $kj+p$ where $j\geq1$ in the third term, and we multiply by $(1-q^p+z^pq^p)$ because of the parts of the form $kj+p$ where $j=0$. Therefore,
   \begin{align*}
   	\sum_{n=1}^\infty a_{k,p}(n)q^n
    &=~\frac{\partial}{\partial z} (1-q^p+z^pq^p) \prod_{j=1}^\infty \left( \frac{1-q^{kj+p}}{1-q^j}+z^{kj+p}\frac{q^{kj+p}}{1-q^j} \right)\bigg|_{z=1}\\
    &=~\frac{1}{\qq}\frac{\partial}{\partial z} \prod_{j=0}^\infty \left( 1+(z^{kj+p}-1)q^{kj+p}\right)\bigg|_{z=1}\\
   	&=~\frac{1}{\qq}\sum_{j=0}^\infty (kj+p)q^{kj+p}\\
    &=~\frac{1}{\qq} \bigg(kq^p\frac{q^k}{(1-q^k)^2}+pq^p\frac{1}{1-q^k}\bigg) \\
    &=~\frac{1}{\qq}\cdot \frac{pq^p+(k-p)q^{p+k}}{(1-q^k)^2}.
\end{align*}

The case $p=0$ in the above will give us the generating function for $a_k(n)$.

Theorem \ref{ThmComb} immediately follows from Theorem \ref{ThmGF}; we just compare coefficients.

\subsection{Proof of Theorem \ref{Thm:trunc}}
The generating function for $M_\ell(n)$ was found by Andrews and Merca \cite{AndrewsMerca1}, when they studied a truncated version of Euler's pentagonal number theorem
\begin{equation}\label{eq-1}
    \frac{(-1)^{\ell-1}}{\qq}\sum_{n=-(\ell-1)}^\ell (-1)^n q^{n(3n-1)/2}=(-1)^{\ell-1}+\sum_{n=\ell}^\infty\frac{q^{\binom{\ell}{2}+(\ell+1)n}}{(q;q)_n}\gauss{n-1}{\ell-1},
\end{equation}
where $\ell \geq 1$, $(a;q)_n=\dfrac{(a;q)_\infty}{(aq^n;q)_\infty}$ and the Gausssian binomial $\gauss{n}{\ell}$ equals $\dfrac{(q;q)_n}{(q;q)_\ell(q;q)_{n-\ell}}$ whenever $0\leq \ell \leq n$ and is $0$ otherwise. The sum on the right hand side of equation \eqref{eq-1} is the generating function of $M_\ell(n)$, that is
\begin{equation}\label{eq-2}
    \sum_{n=0}^\infty M_\ell(n)q^n=\sum_{n=\ell}^\infty\frac{q^{\binom{\ell}{2}+(\ell+1)n}}{(q;q)_n}\gauss{n-1}{\ell-1}.
\end{equation}

We now multiply both sides of equation \eqref{eq-1} by
\[
\sum_{n=0}^\infty nq^{kn}=\frac{q^k}{(1-q^k)^2},
\]
which gives us (after using equation \eqref{eq-2}),
\begin{multline*}
      (-1)^{\ell-1}\left(\left(\sum_{n=1}^\infty b_k(n)q^n\right)\left(\sum_{n=-(\ell-1)}^\ell (-1)^n q^{n(3n-1)/2}\right)-\sum_{n=0}^\infty nq^{kn}\right)\\=\left(\sum_{n=0}^\infty nq^{kn}\right)\left(\sum_{n=0}^\infty M_\ell(n)q^n\right).
\end{multline*}
Using the Cauchy product of two power series, Theorem \ref{Thm:trunc} follows from the above.

\subsection{Proof of Theorem \ref{Thm:gen1.7}} The generating function for $MP_\ell(n)$ was found by Andrews and Merca \cite{AndrewsMerca2} when they considered a truncated theta identify of Gauss,
\begin{equation}\label{eq:3.1}
        \frac{(-q;q^2)_\infty}{(q^2;q^2)_\infty}\sum_{j=0}^{2\ell-1}(-q)^{j(j+1)/2}
    =1+(-1)^{\ell-1}\frac{(-q;q^2)_\ell}{(q^2;q^2)_{\ell-1}}\sum_{j=0}^\infty \frac{q^{\ell(2\ell+2j+1)}(-q^{2\ell+2j+3};q^2)_\infty}{(q^{2\ell+2j+2};q^2)_\infty}.
\end{equation}
The sum on the right hand side of equation \eqref{eq:3.1} is the generating function of $MP_\ell(n)$.

We now multiply both sides of equation \eqref{eq:3.1} by $\dfrac{q^k}{(1-q^k)^2}\cdot (-q^2;q^2)_\infty$ and deduce the following identity
\begin{multline}\label{eq:pj}
(-1)^{\ell-1}\left(\left(\sum_{n=0}^\infty b_k(n)q^n\right)\left(\sum_{n=0}^{2\ell-1} (-q)^{n(n+1)/2}\right)- \dfrac{q^k}{(1-q^k)^2}\cdot (-q^2;q^2)_\infty \right)\\=\left(\dfrac{q^k}{(1-q^k)^2}\cdot (-q^2;q^2)_\infty\right)\left(\sum_{n=0}^\infty MP_\ell(n)q^n\right).
\end{multline}
We know that the generating function of the number of partitions into distinct parts is \[ \sum_{n=0}^\infty Q(n)q^n=(-q;q)_\infty.\] Using this, we have
\begin{align*}
\dfrac{q^k}{(1-q^k)^2}\cdot (-q^2;q^2)_\infty &=~\sum_{n=0}^{\infty}nq^{kn}\cdot \sum_{m=0}^{\infty}Q(m)q^{2m}\\
&=~\sum_{n=0}^{\infty}\sum_{j=1}^{\lfloor n/k\rfloor} j Q\left(\frac{n-kj}{2}\right)q^n\\
&=~\sum_{n=0}^\infty c_k(n)q^n.
\end{align*}

Putting this in equation \eqref{eq:pj} we get,
\begin{multline*}
(-1)^{\ell-1}\left(\left(\sum_{n=0}^\infty b_k(n)q^n\right)\left(\sum_{n=0}^{2\ell-1} (-q)^{n(n+1)/2}\right)-\sum_{n=0}^\infty c_k(n)q^n \right)\\=\left(\sum_{n=0}^\infty c_k(n)q^n\right)\left(\sum_{n=0}^\infty MP_\ell(n)q^n\right).
\end{multline*}
Using the Cauchy product of two power series, Theorem \ref{Thm:gen1.7} follows from the above.

\section{Combinatorial Proofs of some of our Results}\label{sec:comb}
In this section we give combinatorial proofs of all but one (Theorem \ref{Thm:gen1.7}) of our results. The approach again closely follows that of Merca and Yee \cite{MercaYee}.
\subsection{Proof of Theorem \ref{ThmComb}} For part (1), we note that for a partition $\la \vin n$, if $ka$ is a part of $\la$ then we split this part into $k$ $a$'s while keeping the remaining parts of $\la$ unchanged. Let us call the new partition $\mu$, then clearly the part $a$ has multiplicity at least $k$ in $\mu$, so we get
\begin{align*}
    a_k(n)&=~\sum_{\la \vin n}\text{different parts divisble by}~k\\
    &=~ k\sum_{\mu\vin n}\text{differents parts with multiplicity}\geq k     =kb_k(n).
\end{align*}

For part (2), let $ka+p$ be a part of $\la \vin n$ which is counted in $a_{k,p}(n)$. We now split $ka$ into $k$ $a$'s while keeping the remaining parts unchanged to get a new partition $\mu \vin n-p$. Again, we split $(ka+p)+(k-p)$ into $k$ $(a+1)$'s while keeping the remaining parts unchanged to get a new partition $\nu \vin n+k-p$. We have
\begin{align*}
      a_{k,p}(n)=&~\sum_{\la \vin n}\text{different parts}\equiv p \pmod k\\
    =&~(k-p)\sum_{\mu \vin n-p}\text{different parts with multiplicity}\geq k \\&+p\sum_{\nu \vin n+k-p}\text{different parts with multiplicity}\geq k\\
    =&~(k-p)b_k(n-p)+pb_k(n+k-p).
\end{align*}

\subsection{Proof of Theorem \ref{ThmGF}}
We prove the generating function for $a_k(n)$ here; the other two generating functions can be proved combinatorially by combining the previous subsection with this proof. In fact, our proof is the same when $2$ is replaced by $k$ in the proof of Corollary \ref{CorM} given by Merca and Yee \cite{MercaYee}, so for the sake of brevity we just outline the steps.

We work with two sets of overpartitions, let $\bar P_k(n)$ be the set of overpartitions of $n$ where exactly one part divisible by $k$ is overlined, and let $\bar A_k(n)$ be the set of colored overpartitions of $n$ where exactly one part divisible by $k$ is overlined and at most one other part divisible by $k$ is colored with blue color. For instance, we have
\[
\bar P_3(6)=\{\bar 6, \bar 3+3, \bar 3+2+1, \bar 3+1+1+1\},
\]
and
\[
\bar A_3(6)=\{\bar 6, \bar 3+3, \bar 3+\textcolor{blue}{3}, \bar 3+2+1, \bar 3+1+1+1\}.
\]
Clearly, $\bar P_k(n)$ is a subset of $\bar A_k(n)$, and we have
\begin{equation}\label{eq:p1}
    a_k(n)=\sum_{\la \in \bar P_k(n)}\text{the overlined part of}~\la .
\end{equation}

Also note that for each partition in $\bar A_k(n)$ we can decompose it into a tuple $(\la,\mu, \nu)$ where $\la$ is the overline part, $\mu$ is the colored part and $\nu$ are the non-colored parts. This gives us
\begin{equation}\label{eq:p2}
    \sum_{n\geq 0}\bar A_k(n)q^n=\frac{q^k}{(1-q^k)}\cdot \frac{1}{(1-q^k)}\cdot \frac{1}{\qq}.
\end{equation}

We now set up the following surjection from $\bar A_k(n)$ to $\bar P_k(n)$: if there is a colored part, we merge it with the overlined part to get a resulting overlined part. The new partition is clearly in $\bar P_k(n)$, and if there are no colored parts then we keep the partition unchanged. Now, for an overlined part $\overline{ka}$ of $\mu\in \bar P_k(n)$, there are $a$ ways to merge an overlined part with a colored part to get $ka$, so we have
\begin{equation}\label{eq:p3}
    \sum_{\mu \in \bar P_k(n)}\text{the overline part of}~\mu=\sum_{\nu \in \bar A_k(n)}k.
\end{equation}

From equations \eqref{eq:p1}, \eqref{eq:p2} and \eqref{eq:p3} we get
\[
\sum_{n\geq 0}a_k(n)q^n=\frac{1}{\qq}\cdot \frac{kq^k}{(1-q^k)^2}.
\]

\subsection{Proof of Theorem \ref{Thm:trunc}}
Again, our proof is similar to the proof of Corollary \ref{cor-m-t}, given by Merca and Yee \cite{MercaYee}, so we mention the main steps without going into too much details. Theorem \ref{Thm:trunc} is equivalent to the following
\begin{multline*}
    (-1)^{\ell-1}\left(\sum_{j=-(\ell-1)}^\ell (-1)^j \left(\sum_{\la \in \bar P_k(n-j(3j-1)/2}\text{overlined part of }\la \right)\right.\\-\left.\frac{1+(-1)^{[n \equiv 0 \pmod k]+1}}{2}\cdot n \right)=\sum_{j=1}^{\lfloor n/k\rfloor}kjM_\ell(n-kj),
\end{multline*}
where we have used Theorem \ref{ThmComb} and equation \eqref{eq:p1}.

We note that
\[
\sum_{\la \in \bar P_k(n)}\text{overlined part of }\la=\sum_{m=1}^{\lfloor n/k\rfloor}km\sum_{\mu \vin (n-km)}1=\sum_{m=1}^{\lfloor n/k\rfloor}km \cdot p(n-km).
\]
The above equation is true since any partition $\la \in \bar P_k(n)$ can be made into a pair of partitions $(\nu, \mu)$ where $\nu$ is the overlined part and $\mu$ is then an ordinary partition.

So, we get
\begin{multline*}
    (-1)^{\ell-1}\sum_{j=-(\ell-1)}^\ell (-1)^j \left(\sum_{\la \in \bar P_k(n-j(3j-1)/2}\text{overlined part of }\la \right)\\
    =(-1)^{\ell-1}\sum_{j=-(\ell-1)}^\ell (-1)^j \sum_{m=1}^{\lfloor (n-j(3j-1)/2)/2\rfloor}km \cdot p(n-j(3j-1)/2-km).
\end{multline*}
The above is equivalent to
\begin{equation}\label{pf-t-1}
    \sum_{m=1}^{\lfloor n/k\rfloor}km\left((-1)^{\ell-1}\sum_{j=-(\ell-1)}^\ell (-1)^j p(n-km-j(3j-1)/2) \right),
\end{equation}
where we rearrange the summation and take $p(n)=0$ if $n<0$.

Merca and Yee \cite{MercaYee} have given a combinatorial proof of the truncated pentagonal number theorem, which is equivalent to the following identity
\begin{equation}\label{pf-t-2}
    (-1)^{\ell-1}\sum_{j=0}^{\ell -1})(-1)^j(p(n-j(3j+1)/2)-p(n-(j+1)(3j+2)/2))=M_\ell(n).
\end{equation}
Using equation \eqref{pf-t-2} in \eqref{pf-t-1}, we get that \eqref{pf-t-1} is equal to
\[
\sum_{m=1}^{\lfloor n/k\rfloor}km\cdot M_\ell(n-km)+(-1)^{\ell-1}\cdot \frac{1+(-1)^{[n \equiv 0 \pmod k]+1}}{2}\cdot n,
\]
which proves the result, since equation \eqref{pf-t-2} already has a combinatorial proof. We need the term $(-1)^{\ell-1}n$ when $n \equiv 0 \pmod k$ because, if $n=kr$ for some $r$ and $m=n/k$, then without this term we get
\[
(-1)^{\ell-1}np(0)=nM_k(0)\Rightarrow n=0.
\]

\section{Concluding Remarks}\label{sec:rem}

\begin{enumerate}
    \item A combinatorial proof of Theorem \ref{Thm:gen1.7} is left as an open problem. Any combinatorial proof of Theorem \ref{Thm:gen1.7} would hinge on a combinatorial interpretation of $c_k(n)$, like we have for $c_2(2n)$. So, a first step towards a combinatorial proof would be such an interpretation of $c_k(n)$.
    \item Identities of the type in Theorems \ref{Thm:trunc} and \ref{Thm:gen1.7} are also known for some other partition statistics, for instance one can see some recent work of Merca \cite{Merca}. It would be interesting to see if one can relate such partition statistics with the ones introduced in this paper.
\end{enumerate}

\section*{Acknowledgements}

The second author is partially supported by the Leverhulme Trust Research Project Grant RPG-2019-083. The authors thank Dr. Nilufar Mana Begum for bringing the paper of Merca and Yee \cite{MercaYee} to their notice, and Prof. Nayandeep Deka Baruah for encouragement. The authors also thank the editor and the anonymous referee for helpful comments.


\begin{thebibliography}{And98}

\bibitem[AM12]{AndrewsMerca1}
George~E. Andrews and Mircea Merca.
\newblock The truncated pentagonal number theorem.
\newblock {\em J. Combin. Theory Ser. A}, 119(8):1639--1643, 2012.

\bibitem[AM18]{AndrewsMerca2}
George~E. Andrews and Mircea Merca.
\newblock Truncated theta series and a problem of {G}uo and {Z}eng.
\newblock {\em J. Combin. Theory Ser. A}, 154:610--619, 2018.

\bibitem[And98]{AndrewsBook}
George~E. Andrews.
\newblock {\em The theory of partitions}.
\newblock Cambridge Mathematical Library. Cambridge University Press,
  Cambridge, 1998.
\newblock Reprint of the 1976 original.

\bibitem[Mer20]{Merca}
Mircea Merca.
\newblock The powers of two as sums over partitions.
\newblock {\em Quaest. Math.}, to appear, 2020.

\bibitem[MY21]{MercaYee}
Mircea Merca and Ae~Ja Yee.
\newblock On the sum of parts with multiplicity at least $2$ in all the
  partitions of $n$.
\newblock {\em Int. J. Number Theory}, 17(3):665--681, 2021.

\end{thebibliography}
\end{document}